\newtheorem{theorem}{Theorem}
\newtheorem{proposition}[theorem]{Proposition}
\newtheorem{lemma}[theorem]{Lemma}
\theoremstyle{definition}
\newtheorem*{remark}{Remark}
\newcommand{\FF}{\mathbb{F}}
\newcommand{\CC}{\mathbb{C}}
\newcommand{\RR}{\mathbb{R}}
\newcommand{\NN}{\mathbb{N}}
\newcommand{\QQ}{\mathbb{Q}}
\newcommand{\1}{\mathbbm{1}}
\newcommand{\e}{\epsilon}
\newcommand{\norm}[1]{\lVert#1\rVert}
\newcommand{\abs}[1]{\lvert#1\rvert}
\newcommand{\bigabs}[1]{\big\lvert#1\big\rvert}
\newcommand{\biggabs}[1]{\bigg\lvert#1\bigg\rvert}
\newcommand{\Biggabs}[1]{\Bigg\lvert#1\Bigg\rvert}
\DeclareMathOperator{\ord}{ord}
\DeclareMathOperator{\Tr}{Tr}
\DeclareMathOperator{\N}{N}
\begin{document}

\title{Asymptotically optimal Boolean functions}

\author{Kai-Uwe Schmidt}
\address{Department of Mathematics, Paderborn University, Warburger Str.\ 100, 33098 Paderborn, Germany.}
\email[K.-U. Schmidt]{kus@math.upb.de}

\thanks{K.-U. Schmidt is partly supported by German Research Foundation (DFG)}

\date{22 November 2017}

\subjclass[2010]{Primary: 11T71; Secondary: 94B05, 06E30}
% \keywords{Boolean function, covering radius, nonlinearity, Reed-Muller code}

\begin{abstract}
The largest Hamming distance between a Boolean function in $n$ variables and the set of all affine Boolean functions in $n$ variables is known as the covering radius $\rho_n$ of the $[2^n,n+1]$ Reed-Muller code. This number determines how well Boolean functions can be approximated by linear Boolean functions. We prove that 
\[
\lim_{n\to\infty}2^{n/2}-\rho_n/2^{n/2-1}=1,
\]
which resolves a conjecture due to Patterson and Wiedemann from~1983. 
\end{abstract}

\maketitle

\thispagestyle{empty}

%%%%%%%%%%%%%%%%%%%%%%%%%%%%%%%%%%%%%%%%%%%%%%%%%%%%%%%%%%%%%%%%%

\section{Introduction and results}

The Hamming distance of two Boolean functions $F,G:\FF_2^n\to\FF_2$ is
\[
d(F,G)=\#\{y\in\FF_2^n:F(y)\ne G(y)\}.
\]
Put
\[
\rho_n=\max_F\min_Gd(F,G),
\]
where the maximum is over all functions $F$ from $\FF_2^n$ to $\FF_2$ and the minimum is over all $2^{n+1}$ affine functions $G$ from $\FF_2^n$ to $\FF_2$. Then~$\rho_n$ equals the covering radius of the $[2^n,n+1]$ Reed-Muller code, whose determination is one of the oldest and most difficult open problems in coding theory~\cite{HelKloMyk1978},~\cite{PatWie1983},~\cite{Slo1987}. We refer to~\cite{CohHonLitLob1997} for background on the covering radius of codes in general and its combinatorial and geometric significance. The determination of $\rho_n$ also answers the question of how well Boolean functions can be approximated by linear functions, which is of significance in cryptography~\cite{Car2010}. One can also interprete~$\rho_n$ in terms of the Fourier coefficients of Boolean functions (see Section~\ref{sec:proof}).
\par
It is convenient to define
\[
\mu_n=2^{n/2}-\rho_n/2^{n/2-1}.
\]
An averaging argument shows that $\mu_n\ge 1$ (see Section~\ref{sec:proof}) and a simple recursive construction involving functions of the form $F(y)+uv$ on $\FF_2^{n+2}$ shows that $\mu_{n+2}\le\mu_n$. The fact that $\mu_2=1$ implies that $\mu_n=1$ for all even $n$; the functions attaining the minimum are known as \emph{bent} functions and these have been studied extensively for more than forty years~\cite{Rot1976},~\cite{Mes2016}.
\par
We are interested in the case that $n$ is odd. Since $\mu_1=\sqrt{2}$, we have $1\le\mu_n\le\sqrt{2}$. It is known that equality holds in the upper bound for $n=3$ (trivial), for $n=5$~\cite{BerWel1972}, and for $n=7$~\cite{Myk1980},~\cite{Hou1996}. It was suggested in~\cite{HelKloMyk1978} that $\mu_n=\sqrt{2}$ for all odd $n$, which was disproved by Patterson and Wiedemann~\cite{PatWie1983}, by showing that
\begin{equation}
\mu_n\le \sqrt{729/512}= 1.19\dots\quad\text{for each $n\ge 15$}.   \label{eqn:mu_upper_bounds}
\end{equation}
More recently it was shown by Kavut and Y\"ucel~\cite{KavYuc2010} that
\[
\mu_n\le \sqrt{49/32}=1.23\dots\quad\text{for each $n\ge 9$}.
\]
Patterson and Wiedemann~\cite{PatWie1983} also conjectured that $\lim_{n\to\infty}\mu_n=1$. However no improvement of~\eqref{eqn:mu_upper_bounds} for large $n$ has been found since this conjecture has been posed in 1983. We shall prove that this conjecture is true.
\par
\begin{theorem}
\label{thm:pw}
We have $\lim_{n\to\infty}\mu_n=1$.
\end{theorem}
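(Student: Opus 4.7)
The plan is to translate the problem into a Fourier-analytic question and then exhibit an explicit sequence of Boolean functions with nearly flat Walsh spectrum. For a Boolean function $F$ on $\FF_2^n$, set $f = (-1)^F$ and let $\hat{f}(u) = \sum_{y \in \FF_2^n} f(y)(-1)^{u \cdot y}$ be its Walsh transform. A direct calculation gives $d(F, y \mapsto u \cdot y + c) = 2^{n-1} - \tfrac12 (-1)^c \hat{f}(u)$, whence $\min_G d(F,G) = 2^{n-1} - \tfrac12 \max_u |\hat{f}(u)|$ and therefore
\[
\mu_n \;=\; 2^{-n/2}\, \min_F \max_u |\hat{f}(u)|.
\]
Parseval's identity $\sum_u \hat{f}(u)^2 = 2^{2n}$ gives the lower bound $\mu_n \ge 1$ from the introduction. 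Combined with $\mu_n = 1$ for even $n$ and the monotonicity $\mu_{n+2} \le \mu_n$, the theorem reduces to producing, for every $\e > 0$, some odd $n$ and a Boolean function $F$ on $\FF_2^n$ with $\max_u |\hat{f}(u)| \le (1+\e)\, 2^{n/2}$.

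To build such an $F$, I would look for a concatenation of bent functions. Write $n = 2m + r$ with $r$ odd and $r = r(\e)$ growing slowly compared to $m$, and regard $\FF_2^n$ as $\FF_2^{2m} \times \FF_2^r$. Choose a family $(B_y)_{y \in \FF_2^r}$ of bent functions on $\FF_2^{2m}$ and an auxiliary function $\psi: \FF_2^r \to \FF_2$, and set $F(x, y) = B_y(x) + \psi(y)$. Since each $B_y$ is bent, $\widehat{(-1)^{B_y}}(u) = 2^m \varepsilon_y(u)$ with $\varepsilon_y(u) \in \{\pm 1\}$, and a direct computation gives
\[
\hat{f}(u, v) \;=\; 2^m \sum_{y \in \FF_2^r} \varepsilon_y(u)\,(-1)^{\psi(y) + v \cdot y}.
\]
The problem then reduces to designing the family and $\psi$ so that, uniformly over $u$, the inner length-$2^r$ $\pm 1$-sequence $\bigl(\varepsilon_y(u)(-1)^{\psi(y)}\bigr)_y$ has Walsh transform in $v$ bounded in modulus by $(1 + \e)\, 2^{r/2}$; this is an explicit flatness requirement on the $2^{2m} \times 2^r$ sign matrix $(\varepsilon_y(u))$.

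The main obstacle is constructing such a family. A uniformly random choice of $(B_y)$ yields only $\max_{u,v}|\hat{f}(u,v)| = O(\sqrt{r}\, 2^{n/2})$ by standard concentration, losing a $\sqrt{\log n}$ factor; classical bent/semi-bent families (Maiorana--McFarland, partial spreads, Dillon) all produce $\mu_n = \sqrt{2}$; and the constructions of Patterson--Wiedemann and Kavut--Y\"ucel deliver only fixed constants strictly greater than $1$. Genuine progress requires an algebraic family whose sign patterns $\varepsilon_y(u)$ exhibit cancellation in \emph{every} direction. A plausible approach is to take the $B_y$ from a Dillon-type partial-spread family indexed by a Singer difference set or a cyclotomic coset of $\FF_{2^n}^*$, so that the inner sum above becomes a character sum over $\FF_{2^n}$ controllable by a Weil-type bound, with $r = r(n)$ tuned so that the error is sharpened from the generic $O(\sqrt{r}\, 2^{n/2})$ down to $(1+o(1))\, 2^{n/2}$. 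The delicate point is that this cancellation must hold uniformly in $u$, not merely on average.
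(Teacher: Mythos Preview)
Your proposal correctly sets up the Fourier reformulation and the reduction via monotonicity, but it is not a proof: the decisive step is explicitly left open. After the bent-concatenation reduction you arrive at the requirement that, for \emph{every} $u\in\FF_2^{2m}$, the $\pm 1$-sequence $\bigl(\varepsilon_y(u)(-1)^{\psi(y)}\bigr)_{y\in\FF_2^r}$ have all Walsh coefficients at most $(1+\e)2^{r/2}$. But for a fixed $u$ this is precisely the original problem on $\FF_2^r$ (with $r$ odd); you have merely pushed it down to fewer variables while simultaneously demanding a solution uniform over $2^{2m}$ twists using a single $\psi$ and a single family $(B_y)$. Nothing in your outline explains why any specific algebraic family achieves this uniform cancellation; the invocation of Dillon/partial-spread families and ``Weil-type bounds'' is a hope rather than an argument, and you yourself flag the uniformity in $u$ as ``the delicate point''. (Incidentally, the random bound should be $O(\sqrt{n}\,2^{n/2})$, not $O(\sqrt{r}\,2^{n/2})$, since the union bound is over $2^n$ pairs $(u,v)$.)

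The paper's argument is structurally different and supplies exactly the two ingredients your sketch is missing. It works multiplicatively: take $H\le\FF_{2^n}^*$ of index $v=7^e$ and let $f$ be a balanced $\pm 1$ function that is constant on each nontrivial coset of $H$ and equal to some $h:H\to\{-1,1\}$ on $H$ itself. The coset-constant part $\hat f_2$ is expressed through Gauss sums of characters of order dividing $v$; an explicit index-two evaluation (via the arithmetic of $\QQ(\sqrt{-7})$) together with Davenport--Hasse and Weyl equidistribution lets one choose $n$ so that every such normalised Gauss sum is arbitrarily close to $1$, forcing $\max_a|\hat f_2(a)|\le 1+o_v(1)$. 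The residual $\hat f_1$ coming from $h$ is then controlled not by a character-sum bound but by Spencer's ``six standard deviations suffice'' discrepancy theorem, which produces an $h$ with $\max_a|\hat f_1(a)|\ll\sqrt{\log(2v)/v}$. Letting $v\to\infty$ finishes. The exact Gauss-sum control (making the main term equal to $1$ rather than merely $O(1)$) and the use of Spencer's theorem (beating the $\sqrt{\log}$ loss you identified for random choices) are precisely the mechanisms your outline lacks.
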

\par
Several researchers (for example in~\cite{SebZhaZhe1995},~\cite{Dob1995},~\cite{MaiSar2002}) also investigated 
\[
\rho'_n=\max_F\min_Gd(F,G),
\]
where now the maximum is over all \emph{balanced} functions $F$ from $\FF_2^n$ to $\FF_2$ (which means that $F$ takes the values $0$ and $1$ equally often) and the minimum is still over all affine functions $G$ from $\FF_2^n$ to $\FF_2$. Put
\[
\mu'_n=2^{n/2}-\rho'_n/2^{n/2-1}.
\]
Slight modifications of our proof of Theorem~\ref{thm:pw} lead to the following result, which proves a conjecture due to Dobbertin~\cite[Conjecture~B]{Dob1995} from 1995.
\begin{theorem}
\label{thm:pw_balanced}
We have $\lim_{n\to\infty}\mu'_n=1$.
\end{theorem}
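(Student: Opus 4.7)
The plan is to derive Theorem~\ref{thm:pw_balanced} from Theorem~\ref{thm:pw} by a probabilistic adjustment: start from a near-optimal (hence near-balanced) function $F$ furnished by Theorem~\ref{thm:pw} and flip a random subset of its values to make it exactly balanced, without perturbing its Fourier spectrum by more than $o(2^{n/2})$. Writing $f(y)=(-1)^{F(y)}$ and $\hat{f}(a)=\sum_y(-1)^{F(y)+\langle a,y\rangle}$, the Hamming distance from $F$ to the affine map $y\mapsto\langle a,y\rangle+b$ equals $2^{n-1}-\tfrac12(-1)^b\hat{f}(a)$, so
\[
\mu_n=\min_F\max_a|\hat{f}(a)|/2^{n/2},
\]
and $\mu'_n$ is the same expression with the minimum restricted to balanced $F$ (equivalently, $\hat{f}(0)=0$). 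Since balanced functions form a subclass, $\mu'_n\ge\mu_n$, so Theorem~\ref{thm:pw} immediately gives $\liminf\mu'_n\ge 1$; the work lies in showing $\limsup\mu'_n\le 1$.

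For the upper bound I would fix, for each odd $n$, a function $F$ provided by Theorem~\ref{thm:pw} with $\max_a|\hat{f}(a)|\le(1+\e_n)2^{n/2}$, where $\e_n\to 0$. Replacing $F$ by $1-F$ if necessary, the imbalance $\hat{f}(0)=\#F^{-1}(0)-\#F^{-1}(1)$ is a nonnegative even integer of size at most $(1+\e_n)2^{n/2}$. Set $k=\hat{f}(0)/2$, choose a set $S$ uniformly at random among the $k$-subsets of $F^{-1}(0)$, and define $\tilde F=F+\1_S\pmod 2$. Then $\tilde F$ is balanced, and a direct computation gives
\[
\hat{\tilde{f}}(a)=\hat{f}(a)-2\sum_{y\in S}(-1)^{\langle a,y\rangle}\quad\text{for all }a\in\FF_2^n.
\]

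The heart of the argument is to show that, with positive probability over $S$,
\[
\max_{a\ne 0}\biggabs{\sum_{y\in S}(-1)^{\langle a,y\rangle}}=O(2^{n/4}\sqrt n)=o(2^{n/2}),
\]
for then $\max_a|\hat{\tilde{f}}(a)|\le(1+\e_n)2^{n/2}+O(2^{n/4}\sqrt n)=(1+o(1))2^{n/2}$, whence $\mu'_n\le 1+o(1)$. For each fixed $a\ne 0$ the character sum in question is a sum of $k=O(2^{n/2})$ values of $\pm 1$ sampled without replacement from $F^{-1}(0)$, of mean $k\hat{f}(a)/(2\#F^{-1}(0))=O(1)$ and variance $O(k)$; Hoeffding's inequality for sampling without replacement (or Serfling's refinement) furnishes a sub-Gaussian tail with variance proxy $O(k)$, so a union bound at deviation level $C\sqrt{kn}$ with $C$ large enough controls all $2^n-1$ nontrivial characters simultaneously. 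The main obstacle is precisely this concentration step: the character sums are highly dependent across $a$, so a tail bound that both handles sampling without replacement and survives a union bound over $2^n$ events is essential. A naive deterministic flipping of $k$ bits would only give $\mu'_n\le 2\mu_n\to 2$; it is the cancellations forced by randomization that reduce the error from $O(2^{n/2})$ to $O(2^{n/4}\sqrt n)$ and make the plan succeed.
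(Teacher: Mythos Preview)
Your argument is correct and takes a genuinely different route from the paper. The paper does not treat Theorem~\ref{thm:pw} as a black box: it goes back into the specific construction of Proposition~\ref{pro:main}, where the imbalance comes only from the values of $h$ on the subgroup $H$ (the $f_2$-part is balanced by design), and Spencer's bound at $a=0$ already forces $\bigl|\sum_{y\in H}h(y)\bigr|\le 11\sqrt{2^n\log(2v)/v}$, which is of order $2^{n/2}/\sqrt v$ rather than $2^{n/2}$. Thus only $O(2^{n/2}/\sqrt v)$ flips are needed, and the crude triangle inequality bounds the perturbation of every Fourier coefficient by $O(1/\sqrt v)$ in the paper's normalisation---no randomisation is required. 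Your approach starts from an arbitrary near-optimal $f$, where the imbalance can be as large as $(1+\e_n)2^{n/2}$; then the triangle inequality is useless, and you compensate with Hoeffding for sampling without replacement plus a union bound over all $a$. What you gain is a clean reduction $\mu_n\to1\Rightarrow\mu'_n\to1$ that works for every~$n$ at once, so you need neither the monotonicity step $\mu'_{n+2}\le\mu'_n$ nor the separate appeal to Dobbertin for even~$n$ that the paper invokes. What the paper's route gains is a fully deterministic modification with an explicit error in terms of~$v$.
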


%%%%%%%%%%%%%%%%%%%%%%%%%%%%%%%%%%%%%%%%%%%%%%%%%%%%%%%%%%%%%%%%%

\section{Proof of main result}
\label{sec:proof}

In what follows, we identify $\FF_2^n$ with $\FF_{2^n}$ and consider functions $f:\FF_{2^n}\to\CC$. Let $\psi:\FF_{2^n}\to\CC$ be the canonical additive character of $\FF_{2^n}$, which is given by $\psi(y)=(-1)^{\Tr(y)}$, where $\Tr$ is the absolute trace function on $\FF_{2^n}$. The \emph{Fourier transform} of $f$ is the function $\hat f:\FF_{2^n}\to\CC$ given by
\[
\hat f(a)=\frac{1}{2^{n/2}}\sum_{y\in \FF_{2^n}}f(y)\psi(ay).
\]
It is well known~\cite{Car2010} and readily verified that
\[
\mu_n=\min_f\max_{a\in\FF_{2^n}}\;\abs{\hat f(a)},
\]
where the minimum is over all functions $f:\FF_{2^n}\to \{-1,1\}$. From Parseval's identity
\[
\sum_{a\in\FF_{2^n}}\abs{\hat f(a)}^2=\sum_{y\in\FF_{2^n}}\abs{f(y)}^2
\]
it follows now that $\mu_n\ge 1$.
\par
We shall construct functions $f$ with image $\{-1,1\}$ for which $\abs{\hat f(a)}$ is small for all $a\in\FF_{2^n}$. Let $H$ be a (multiplicative) subgroup of $\FF_{2^n}^*$ of index~$v$ and define the indicator function of $H$ on $\FF_{2^n}$ by
\[
\1_H(y)=\begin{cases}
1 & \text{for $y\in H$}\\
0 & \text{otherwise}.
\end{cases}
\]
Let $h:H\to\{-1,1\}$ be a function to be specified later. Let $T$ be a complete system of coset representatives of $H$ in $\FF_{2^n}^*$ and let $g:T\to \{0,-1,1\}$ be a function satisfying $g(z)=0$ if and only if $z\in H$ and such that $g$ is balanced, which means that the images $-1$ and $1$ occur equally often. We define $f:\FF_{2^n}\to\{-1,1\}$ by $f(0)=1$ and
\[
f(y)=\1_H(y)\,h(y)+\sum_{z\in T}\1_H(y/z)\,g(z)\quad\text{for $y\in\FF_{2^n}^*$}.
\]
Note that $f$ is constant on the cosets of $H$, except for $H$ itself. Such functions were also used by Patterson and Wiedemann~\cite{PatWie1983} in their proof of~\eqref{eqn:mu_upper_bounds} and have been also studied in several other papers, for example in~\cite{BriGilLan2005}.
\par
Recall that $\ord_v(a)$ for integers $v$ and $a$ with $v>0$ and $\gcd(a,v)=1$ is the smallest positive integer $t$ such that $a^t\equiv 1\pmod v$.
\begin{proposition}
\label{pro:main}
Let $e$ be a positive integer and let $v=7^e$. Then there exists an odd multiple $n$ of $\ord_v(2)$ and a function $h:H\to\{-1,1\}$ such that the function $f:\FF_{2^n}\to\{-1,1\}$, defined above, satisfies
\[
\max_{a\in \FF_{2^n}}\;\abs{\hat f(a)}\le 1+12\sqrt{\frac{\log(2v)}{v}}.
\]
\end{proposition}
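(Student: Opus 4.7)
The plan is to express $\hat f(a)$ using multiplicative characters of $\FF_{2^n}^*$, then choose $g$ algebraically and $h$ probabilistically so as to control the resulting three-term decomposition uniformly in $a$.

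Let $\chi$ be a multiplicative character of $\FF_{2^n}^*$ of order $v$ with kernel $H$, and set $G_k:=\sum_{y\in\FF_{2^n}^*}\chi^k(y)\psi(y)$ (so $|G_k|=2^{n/2}$ for $k\not\equiv 0\pmod v$) and $\hat g(k):=\sum_{i=1}^{v-1}g(z_i)\zeta_v^{-ik}$. Using $\1_H=v^{-1}\sum_{k=0}^{v-1}\chi^k$ on $\FF_{2^n}^*$ and the balance of $g$, a direct computation gives, for $a$ in the $j$-th coset $C_j$ of $H$ with $a\neq 0$,
\[
2^{n/2}\hat f(a)=1+\beta(a)+\gamma_j,\quad \beta(a):=\sum_{y\in H}h(y)\psi(ay),\quad \gamma_j:=\frac{1}{v}\sum_{k=1}^{v-1}G_k\zeta_v^{-jk}\hat g(k).
\]
Parseval on $\ZZ/v\ZZ$ yields $\sum_j|\gamma_j|^2=2^n(v-1)$, so $|\gamma_j|$ is typically of size $2^{n/2}$; this accounts for the leading ``$1$'' in the claimed bound.

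The decisive step is to bound $\max_j|\gamma_j|$ sharply by $2^{n/2}(1+O(\sqrt{\log(2v)/v}))$. I would choose $g$ so that $|\hat g(k)|$ is close to $\sqrt v$ for every nonzero $k$, which for $v=7^e$ can be achieved via a cyclotomic construction based on the arithmetic of $(\ZZ/v\ZZ)^*$ (for instance a signed characteristic function of a union of cosets of a multiplicative subgroup). Then $\xi_k:=v^{-1/2}2^{-n/2}G_k\hat g(k)$ has unit modulus up to a negligible error, and $\gamma_j=v^{-1/2}2^{n/2}\sum_k\xi_k\zeta_v^{-jk}$; the supremum of this exponential sum in $j$ is then bounded via the Frobenius invariance $G_{2k}=G_k$ (which holds because $\ord_v(2)\mid n$) together with Stickelberger-type information on the Gauss sums $G_k$ for $v=7^e$, giving the required $\max_j|\gamma_j|\le 2^{n/2}(1+O(\sqrt{\log(2v)/v}))$.

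For $\beta(a)$, I take $h\in\{-1,+1\}^H$ by a probabilistic argument. Choosing $h$ uniformly at random and applying Hoeffding's inequality together with a union bound over $a\in\FF_{2^n}$ yields $|\beta(a)|\le C\sqrt{|H|n}$ with positive probability, contributing $O(\sqrt{n/v})$ to $|\hat f(a)|$; to push this down to $O(\sqrt{\log(2v)/v})$ one restricts to a structured sub-family of $h$'s (for example, $h$ constant on Frobenius orbits in $H$, or $h$ derived from a character of a suitable subgroup) so that the effective number of random bits is reduced, and then picks $n$ to be the smallest odd multiple of $\ord_v(2)$ compatible with both estimates. The value $\hat f(0)$ is handled separately by the near-balance of $h$ on $H$. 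The main obstacle is the sharp bound on $\max_j|\gamma_j|$: a purely random $g$ loses a factor $\sqrt{\log v}$ in the supremum compared to the $\ell^2$ average, so the argument really requires the cyclotomic construction of $g$ together with sharp algebraic control on the Gauss sums $G_k$ available for $v=7^e$.
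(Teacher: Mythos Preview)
Your decomposition $2^{n/2}\hat f(a)=1+\beta(a)+\gamma_j$ is correct, but the two key steps---bounding $\max_j|\gamma_j|$ and bounding $\max_a|\beta(a)|$---are both handled very differently in the paper, and your proposed approaches to each contain genuine gaps.

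\textbf{The $\gamma_j$ term.} The paper does \emph{not} rely on any special choice of $g$; any balanced $g$ works. The decisive idea you are missing is that the freedom lies in the choice of $n$, not of $g$: from the explicit index-two evaluation of the Gauss sums one sees that $G(\tau)/2^{m/2}$ (for $\tau$ of order $7^e$ over $\FF_{2^m}$, $m=\ord_v(2)$) is a unit-modulus algebraic number that is \emph{not} a root of unity. By Weyl equidistribution, one can therefore pick odd $s$ so that $G(\tau)^s/2^{sm/2}$ has argument as small as one likes; the Davenport--Hasse relation and the explicit formula then force \emph{all} $G_k/2^{n/2}$ to be within any prescribed $\epsilon$ of $1$ for $n=sm$. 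Writing $G_k=2^{n/2}(1+\delta_k)$ with $|\delta_k|\le\epsilon$, the sum $\gamma_j$ splits into a main term $2^{n/2}\cdot\tfrac{1}{v}\sum_{k=1}^{v-1}\zeta_v^{-jk}\hat g(k)=2^{n/2}g(z_{j'})$, which is exactly $0$ or $\pm 2^{n/2}$, plus an error at most $\epsilon v\cdot 2^{n/2}$. Taking $\epsilon=\tfrac12\sqrt{\log(2v)/v^3}$ gives the bound immediately. Your route via a cyclotomic $g$ with $|\hat g(k)|\approx\sqrt v$ and ``Stickelberger-type information'' does not lead anywhere obvious: Stickelberger concerns $p$-adic valuations, not archimedean arguments, and even with unit-modulus $\xi_k$ the quantity $v^{-1/2}\sum_k\xi_k\zeta_v^{-jk}$ has no reason to be within $O(\sqrt{\log v/v})$ of $1$ uniformly in $j$.

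\textbf{The $\beta(a)$ term.} Hoeffding plus a union bound over $a\in\FF_{2^n}$ yields only $|\beta(a)|\lesssim\sqrt{|H|\,n}$, hence $|\hat f_1(a)|\lesssim\sqrt{n/v}$. Since the argument above forces $n$ to be large (and even the minimal $n=\ord_v(2)=3\cdot 7^{e-1}$ is comparable to $v$), this is useless. The paper instead invokes Spencer's ``six standard deviations suffice'' theorem: for an $M\times N$ $\pm1$-matrix with $M\ge N$ there is $u\in\{-1,1\}^N$ with $\|Au\|_\infty\le 11\sqrt{N\log(2M/N)}$. With $M=2^n$ and $N=|H|=(2^n-1)/v$ the logarithmic factor is $\log(2v)$, \emph{independent of $n$}, giving $|\hat f_1(a)|\le 11\sqrt{\log(2v)/v}$ as required. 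Your suggestion of ``structured $h$'' to reduce the entropy is too vague to close this gap, and choosing the smallest admissible $n$ conflicts with the equidistribution step needed for $\gamma_j$.
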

\par
A routine induction shows that $\ord_{7^e}(2)$ equals $\phi(7^e)/2=3\cdot 7^{e-1}$, and so is odd, for all positive integers~$e$ (where $\phi$ is the Euler totient function). Now let $e$ tend to infinity in Proposition~\ref{pro:main} and use $\mu_n=1$ for all even~$n$ and the inequality $1\le \mu_{n+2}\le\mu_n$ for all $n$ to deduce Theorem~\ref{thm:pw} from Proposition~\ref{pro:main}.
\par
\begin{remark}
Proposition~\ref{pro:main} remains true if $7$ is replaced by an arbitrary prime $q\equiv 3\pmod 4$ such that $\ord_{q^e}(2)=\phi(q^e)/2$ for each $e\in\{1,2\}$ (which ensures that this identity holds for all positive integers $e$). The first primes of this form are $7,23,47,71,79$, but it is not known whether there are infinitely many such primes. We choose $q=7$ to keep our proof simple. 
\end{remark}
\par
To prove Proposition~\ref{pro:main}, we define functions $f_1,f_2:\FF_{2^n}\to\{0,-1,1\}$ by
\begin{align*}
f_1(y)&=\1_H(y)\,h(y),\\
f_2(y)&=\sum_{z\in T}\1_H(y/z)\,g(z),
\end{align*}
so that $f(y)=f_1(y)+f_2(y)$ for all $y\in\FF_{2^n}^*$ and $\hat f(a)=2^{-n/2}+\hat f_1(a)+\hat f_2(a)$ for all $a\in\FF_{2^n}$. We shall see that bounding $\abs{\hat f_1(a)}$ is not difficult using known results from probabilistic combinatorics. Bounding $\abs{\hat f_2(a)}$ requires a little more work. 
\par
For a multiplicative character $\chi$ of $\FF_{2^n}$, the \emph{Gauss sum} $G(\chi)$ is defined to be
\[
G(\chi)=\sum_{y\in \FF_{2^n}^*}\psi(y)\chi(y).
\]
It is well known that $\abs{G(\chi)}=2^{n/2}$ if $\chi$ is nontrivial (which means that $\chi(y)\ne 1$ for some $y\in\FF_{2^n}^*$)~\cite[Theorem 5.11]{LidNie1997}.
\par
We begin with the following elementary lemma.
\par
\begin{lemma}
\label{lem:gauss_sum_fourier}
Let $\e>0$ and suppose that, for all nontrivial multiplicative characters $\chi$ of $\FF_{2^n}$ of order dividing $v$, we have
\[
\biggabs{\frac{G(\chi)}{2^{n/2}}-1}\le\e.
\]
Then we have
\[
\max_{a\in\FF_{2^n}}\;\abs{\hat f_2(a)}\le 1+\e\,v.
\]
\end{lemma}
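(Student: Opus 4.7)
The plan is to expand $\hat f_2(a)$ by Fourier-inverting the indicator of $H$ in terms of multiplicative characters. Since $f_2(0)=0$, I would write
\[
\hat f_2(a)=\frac{1}{2^{n/2}}\sum_{z\in T}g(z)\sum_{h\in H}\psi(azh),
\]
and then replace $\1_H$ on $\FF_{2^n}^*$ by $v^{-1}\sum_{\chi^v=1}\chi$, the sum running over the multiplicative characters of $\FF_{2^n}$ of order dividing $v$ (these are precisely the characters that are trivial on $H$). Substituting and swapping sums, the inner sum over $y\in\FF_{2^n}^*$ for each $\chi$ is a twisted additive sum, which for $a\ne 0$ evaluates to $\overline{\chi(a)}G(\chi)$ when $\chi$ is nontrivial and to $-1$ when $\chi$ is trivial.

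Next I would separate the trivial character. Its contribution vanishes because $g$ is balanced, i.e.\ $\sum_{z\in T}g(z)=0$. The remaining expression is
\[
\hat f_2(a)=\frac{1}{v\cdot 2^{n/2}}\sum_{\substack{\chi^v=1\\ \chi\ne\chi_0}}\overline{\chi(a)}\,G(\chi)\,S(\chi),\qquad S(\chi):=\sum_{z\in T}g(z)\overline{\chi(z)}.
\]
The crucial step is to write $G(\chi)=2^{n/2}+\bigl(G(\chi)-2^{n/2}\bigr)$ and split the sum into a main term and an error term.

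For the main term $\frac{1}{v}\sum_{\chi\ne\chi_0}\overline{\chi(a)}S(\chi)$, I can add back the (vanishing) trivial-character contribution and recognise it, by orthogonality, as $\sum_{z\in T}g(z)\1_H(az)$. Exactly one $z\in T$ satisfies $az\in H$, so this main term equals $g(z^\ast)$ for some $z^\ast\in T$ and is therefore bounded by $1$ in modulus. This is where the orthogonality pays off: what looks like an exponentially large sum of Gauss-sum magnitudes collapses to a single $\pm 1$ (or $0$).

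For the error term, the triangle inequality together with the hypothesis $|G(\chi)-2^{n/2}|\le\e\, 2^{n/2}$ gives the bound
\[
\frac{\e}{v}\sum_{\chi\ne\chi_0,\,\chi^v=1}\abs{S(\chi)}\le\frac{\e}{v}(v-1)^2\le\e v,
\]
using the trivial estimates $|S(\chi)|\le\sum_z|g(z)|=v-1$ (since $g$ vanishes on exactly one coset representative) and that there are $v-1$ nontrivial characters. Adding the two bounds and observing that $\hat f_2(0)=0$ yields $|\hat f_2(a)|\le 1+\e v$ for every $a$. I do not expect a real obstacle here: the only point requiring care is the orthogonality collapse of the main term, which is what prevents the bound from degrading to $\e v^2$.
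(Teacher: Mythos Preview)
Your proposal is correct and follows essentially the same route as the paper: expand $\1_H$ via the characters of order dividing $v$, kill the trivial-character piece using that $g$ is balanced, split each nontrivial $G(\chi)$ as $2^{n/2}+(G(\chi)-2^{n/2})$, collapse the main term by orthogonality to a single value $g(z^\ast)\in\{-1,0,1\}$, and bound the error by $\e(v-1)^2/v\le \e v$. The only cosmetic difference is that the paper fixes a generator $\chi$ and sums over powers $\chi^j$, whereas you sum directly over characters with $\chi^v=1$; the computations are identical.
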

\begin{proof}
Since $g$ is balanced, we have $\hat f_2(0)=0$, so let $a\in\FF_{2^n}^*$. Let $\chi$ be a multiplicative character of $\FF_{2^n}$ of order $v$. Then the indicator function $\1_H$ satisfies
\begin{equation}
\1_H(y)=\frac{1}{v}\sum_{j=0}^{v-1}\chi^j(y)\quad\text{for each $y\in \FF_{2^n}^*$}.   \label{eqn:Fourier_indicator}
\end{equation}
Therefore we have
\begin{align*}
\sum_{y\in \FF_{2^n}}\1_H(y)\psi(ay)&=\frac{1}{v}\sum_{j=0}^{v-1}\sum_{y\in \FF_{2^n}^*}\psi(ay)\chi^j(y)\\
&=\frac{1}{v}\sum_{j=0}^{v-1}\chi^j(a^{-1})\sum_{y\in\FF_{2^n}^*}\psi(y)\chi^j(y)\\
&=\frac{1}{v}\sum_{j=0}^{v-1}\overline{\chi}^j(a)G(\chi^j),
\end{align*}
which we use to obtain
\begin{align*}
2^{n/2}\hat f_2(a)&=\sum_{y\in\FF_{2^n}}\sum_{z\in T}\1_H(y/z)g(z)\psi(ay)\\
&=\sum_{z\in T}g(z)\sum_{y\in \FF_{2^n}}\1_H(y)\psi(ayz)\\
&=\frac{1}{v}\sum_{z\in T}g(z)\sum_{j=0}^{v-1}\overline{\chi}^j(az)\,G(\chi^j)\\
&=\frac{1}{v}\sum_{j=0}^{v-1}G(\chi^j)\,\overline{\chi}^j(a)\sum_{z\in T}g(z)\,\overline{\chi}^j(z).
\end{align*}
Now write $G(\chi^j)=2^{n/2}(1+\gamma_j)$, so that $\abs{\gamma_j}\le \e$ for all $j\in\{1,\dots,v-1\}$ by our assumption. Since $G(\chi^0)=-1$, we obtain $\hat f_2(a)=M(a)+E(a)$, where
\begin{align*}
M(a)&=\frac{1}{v}\sum_{j=1}^{v-1}\overline{\chi}^j(a)\sum_{z\in T}g(z)\overline{\chi}^j(z)-\frac{1}{2^{n/2}\,v}\sum_{z\in T}g(z)\\
&=\frac{1}{v}\sum_{z\in T}g(z)\sum_{j=1}^{v-1}\overline{\chi}^j(az)\\
&=\frac{1}{v}\sum_{z\in T}g(z)\sum_{j=0}^{v-1}\overline{\chi}^j(az)-\frac{1}{v}\sum_{z\in T}g(z)\\
&=g(b)\quad\text{for $b\in T$ such that $ab\in H$},
\end{align*}
using that $g$ is balanced and~\eqref{eqn:Fourier_indicator} again, and
\[
\abs{E(a)}=\Biggabs{\frac{1}{v}\sum_{j=1}^{v-1}\gamma_j\,\overline{\chi}^j(a)\sum_{z\in T}g(z)\overline{\chi}^j(z)}\le \e\,v.
\]
This gives the required result.
\end{proof}
\par
The following explicit evaluation of certain Gauss sums~\cite[Proposition~4.2]{Lan1997} (see also~\cite[Theorem~4.1]{YanXia2010}) will help us to control the error term in Lemma~\ref{lem:gauss_sum_fourier}.
\begin{lemma}[{\cite[Proposition~4.2]{Lan1997}}]
\label{lem:gauss_sum_index_two}
Let $q>3$ be a prime satisfying $q\equiv 3\pmod 4$. Let $d$ be a positive integer, write $k=\phi(q^d)/2$, and let $p$ be a prime such that $\ord_{q^d}(p)=k$. Let $\tau$ be a multiplicative character of $\FF_{p^k}$ of order~$q^d$ and let $h$ be the class number of $\QQ(\sqrt{-q})$. Then
\[
G(\tau)=\frac{1}{2}(a+b\sqrt{-q})p^{(k-h)/2},
\]
where $a$ and $b$ are integers satisfying $a,b\not\equiv 0\pmod p$, $a^2+b^2q=4p^h$, and $ap^{(k-h)/2}\equiv -2\pmod q$.
\end{lemma}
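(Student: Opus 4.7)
My plan has three stages: (i) a Galois descent showing $G(\tau)\in\QQ(\sqrt{-q})$; (ii) an application of Stickelberger's theorem, together with the splitting of $p$, to pin down the ideal $(G(\tau))$ in $\QQ(\sqrt{-q})$ and extract the factor $p^{(k-h)/2}$; and (iii) a direct mod-$q$ reduction for the final congruence.

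For stage (i), the condition $\ord_{q^d}(p)=\phi(q^d)/2$ combined with $q>3$ forces $\ord_q(p)=(q-1)/2$, so $p$ is a quadratic residue mod $q$ and $p\not\equiv 1\pmod q$, so that $p-1$ is a unit mod $q^d$. Hence $q^d$ divides $(p^k-1)/(p-1) = 1+p+\dots+p^{k-1}$, and so $\tau$ restricted to $\FF_p^*\subseteq\FF_{p^k}^*$ is trivial. This kills the action of $\mathrm{Gal}(\QQ(\zeta_p)/\QQ)$ on $G(\tau)$, which is $\sigma_b(G(\tau))=\tau(b)^{-1}G(\tau)$. Meanwhile, the substitution $x\mapsto x^p$ on $\FF_{p^k}^*$ together with the Frobenius-invariance of the trace shows $G(\tau^p)=G(\tau)$, and this translates to $\sigma_p\in\mathrm{Gal}(\QQ(\zeta_{q^d})/\QQ)$ fixing $G(\tau)$. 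The joint fixed field in $\QQ(\zeta_{pq^d})$ is therefore the unique quadratic subfield of $\QQ(\zeta_{q^d})$, which is $\QQ(\sqrt{-q})$ because $q\equiv 3\pmod 4$.

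For stage (ii), write $G(\tau)=(A+B\sqrt{-q})/2\in\mathcal{O}_{\QQ(\sqrt{-q})}$; the identity $\abs{G(\tau)}^2=p^k$ then gives $A^2+qB^2=4p^k$. Since $p$ is a quadratic residue mod $q$ and $q\equiv 3\pmod 4$, $p$ splits as $(p)=\mathfrak{p}\overline{\mathfrak{p}}$, so $(G(\tau))=\mathfrak{p}^\alpha\overline{\mathfrak{p}}^\beta$ with $\alpha+\beta=k$. Applying Stickelberger's theorem to the Gauss sum in $\QQ(\zeta_{q^d})$ and descending along the index-two Galois covering identifies $\{\alpha,\beta\}=\{(k+h)/2,(k-h)/2\}$, where $h$ is the class number of $\QQ(\sqrt{-q})$; the class number enters as the order of the ideal class of $\mathfrak{p}$, so that $\mathfrak{p}^h$ is the smallest positive power of $\mathfrak{p}$ that is principal. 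Extracting the factor $(p)^{(k-h)/2}=(\mathfrak{p}\overline{\mathfrak{p}})^{(k-h)/2}$ and absorbing the unit ambiguity $\pm 1$ (the only units since $q>3$) into a generator $\pi=(a+b\sqrt{-q})/2$ of $\mathfrak{p}^h$ yields $G(\tau)=\tfrac{1}{2}(a+b\sqrt{-q})p^{(k-h)/2}$ with $a^2+qb^2=4p^h$ coming from $\N(\pi)=p^h$. That $a,b\not\equiv 0\pmod p$ follows from an elementary case analysis on the norm equation: divisibility of either coefficient by $p$ forces the other (using $q\ne p$), so $\pi$ would be divisible by $p$ in $\mathcal{O}$, contradicting that $\pi$ generates $\mathfrak{p}^h$ but not $\overline{\mathfrak{p}}$.

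For stage (iii), reduce both sides modulo a prime $\mathfrak{q}_0$ of $\mathcal{O}_{\QQ(\sqrt{-q})}$ above $q$. Since $q$ is totally ramified in $\QQ(\zeta_{q^d})$, every $q^d$-th root of unity is $\equiv 1\pmod{\mathfrak{q}_0}$, so $\tau(x)\equiv 1$ for all $x\in\FF_{p^k}^*$ and hence $G(\tau)\equiv\sum_{x\in\FF_{p^k}^*}\psi(x)=-1\pmod{\mathfrak{q}_0}$. On the right-hand side $\sqrt{-q}\in\mathfrak{q}_0$, so $\tfrac{1}{2}(a+b\sqrt{-q})p^{(k-h)/2}\equiv\tfrac{1}{2}ap^{(k-h)/2}\pmod{\mathfrak{q}_0}$; equating and clearing the factor of $1/2$ gives $ap^{(k-h)/2}\equiv-2\pmod q$. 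The main obstacle in the plan is stage (ii): the Stickelberger-based identification of $\alpha-\beta$ with the class number $h$ requires careful bookkeeping of the Stickelberger element's image in the index-two quotient of $(\ZZ/q^d\ZZ)^*$ and ultimately invocation of the analytic class number formula for $\QQ(\sqrt{-q})$; stages (i) and (iii) are essentially formal once this structural input is in place.
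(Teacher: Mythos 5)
The first thing to note is that the paper contains no proof of this lemma: it is imported as a black box from Langevin \cite[Proposition~4.2]{Lan1997} (see also \cite[Theorem~4.1]{YanXia2010}), so there is no internal argument to compare yours against. Judged on its own merits, your three-stage plan is the standard proof of the index-$2$ Gauss sum evaluation and is essentially the argument of the cited sources. Stages (i) and (iii) are correct and complete as sketched: the order hypothesis does force $\ord_q(p)=(q-1)/2$ (if $p$ were a primitive root mod $q$, then $q-1$ would have to divide $\phi(q^d)/2=(q-1)q^{d-1}/2$, which fails for odd $q$), hence $\tau$ is trivial on $\FF_p^*$, the descent to the unique quadratic subfield $\QQ(\sqrt{-q})$ of $\QQ(\zeta_{q^d})$ goes through, and reduction modulo the ramified prime above $q$ correctly yields $ap^{(k-h)/2}\equiv-2\pmod q$.

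Stage (ii), which you yourself flag as the crux, is only named rather than carried out, and it contains one misstatement: the class number does \emph{not} enter as ``the order of the ideal class of $\mathfrak{p}$'' --- that order merely divides $h$ and need not equal it. The actual mechanism is that Stickelberger's theorem expresses $\alpha-\beta$ as a character sum modulo $q$, which Dirichlet's class number formula evaluates as $h$ when $q\equiv 3\pmod 4$ and $q>3$; the principality of $\mathfrak{p}^h$ is then a consequence of the factorization, not an input to it. A second soft spot: your argument that $p\mid a$ and $p\mid b$ would force $p\mid\pi$ in $\mathcal{O}_{\QQ(\sqrt{-q})}=\ZZ[(1+\sqrt{-q})/2]$ needs extra care precisely when $p=2$, which is the case the paper uses; it is cleaner to observe that $(\pi)=\mathfrak{p}^h$ is coprime to $\overline{\mathfrak{p}}$, so $\pi$ cannot be divisible by $p$, whose ideal is $\mathfrak{p}\overline{\mathfrak{p}}$. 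Both points are repairable, so your outline is sound, but as written it is a roadmap to the proof in the literature rather than a self-contained argument --- which, given that the paper itself simply cites the result, is a defensible place to stop.
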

\par
We shall apply Lemma~\ref{lem:gauss_sum_index_two} with $p=2$ and $q=7$. Since the class number of $\QQ(\sqrt{-7})$ equals $1$ and
\[
2^{(\phi(7^d)/2-1)/2}\equiv 2\pmod 7
\]
for all positive integers $d$, we find that $a=-1$ and $b^2=1$ in this case. As noted after Proposition~\ref{pro:main}, we have $\ord_{7^d}(2)=\phi(7^d)/2$ for all positive integers~$d$, so that the hypothesis in Lemma~\ref{lem:gauss_sum_index_two} is satisfied for $p=2$ and~$q=7$.
\par
As a corollary to Lemma~\ref{lem:gauss_sum_index_two}, we obtain the following lemma.
\begin{lemma}
\label{lem:Gauss_sum_evaluations}
Let $e$ and $d$ be integers satisfying $1\le d\le e$ and write $m=\ord_{7^e}(2)$. Let $\chi$ be a multiplicative character of $\FF_{2^{sm}}$ of order $7^d$. Then
\[
\frac{G(\chi)}{2^{sm/2}}=-(-1)^s\left(\frac{-1\pm\sqrt{-7}}{2^{3/2}}\right)^{7^{e-d}\,s},
\]
where the sign depends on $\chi$.
\end{lemma}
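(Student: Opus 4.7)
The plan is to reduce the Gauss sum on $\FF_{2^{sm}}$ to one on the smallest subfield that already carries a character of order $7^d$, evaluate the latter explicitly using Lemma~\ref{lem:gauss_sum_index_two}, and then collect terms.

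Set $k=\ord_{7^d}(2)$. By the remark after Proposition~\ref{pro:main} applied with $e$ replaced by $d$, we have $k=\phi(7^d)/2=3\cdot 7^{d-1}$. In particular $k\mid m$ and hence $k\mid sm$, so $\FF_{2^k}$ is contained in $\FF_{2^{sm}}$. The first step is to show that every multiplicative character $\chi$ of $\FF_{2^{sm}}$ of order $7^d$ is the lift of a character $\tau$ of $\FF_{2^k}$ of the same order, via $\chi=\tau\circ\N$, where $\N\colon\FF_{2^{sm}}^*\to\FF_{2^k}^*$ is the norm. Since the number of characters of order $7^d$ on either side equals $\phi(7^d)$ and the lift map is injective (because $\N$ is surjective), it is a bijection.

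Next I apply the Davenport--Hasse lifting relation
\[
G(\chi)=(-1)^{sm/k-1}\,G(\tau)^{sm/k}.
\]
For $G(\tau)$, Lemma~\ref{lem:gauss_sum_index_two} with $p=2$, $q=7$ and $d$ as given applies: the class number of $\QQ(\sqrt{-7})$ is $h=1$, the congruence $a\cdot 2^{(k-1)/2}\equiv -2\pmod 7$ together with $2^{(k-1)/2}\equiv 2\pmod 7$ forces $a=-1$, and then $a^2+b^2\cdot 7=4\cdot 2$ forces $b=\pm 1$. Hence
\[
G(\tau)=\frac{-1\pm\sqrt{-7}}{2}\cdot 2^{(k-1)/2},
\]
with sign depending on $\tau$ (equivalently, on $\chi$).

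Substituting, and using $sm/k=s\cdot 7^{e-d}$ (which is odd times $s$, so $(-1)^{sm/k-1}=-(-1)^s$) together with $m=3\cdot 7^{e-1}$, I compute the exponent of $2$:
\[
\tfrac{1}{2}(k-1)\cdot\tfrac{sm}{k}=\tfrac{1}{2}(3\cdot 7^{d-1}-1)\cdot s\cdot 7^{e-d}=\tfrac{sm}{2}-\tfrac{s\cdot 7^{e-d}}{2}.
\]
Dividing by $2^{sm/2}$ absorbs the $2^{sm/2}$ factor and leaves an extra $2^{-s\cdot 7^{e-d}/2}$ inside the power, giving
\[
\frac{G(\chi)}{2^{sm/2}}=-(-1)^s\left(\frac{-1\pm\sqrt{-7}}{2^{3/2}}\right)^{s\cdot 7^{e-d}},
\]
which is the claimed identity. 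The only nontrivial ingredients are Davenport--Hasse and Lemma~\ref{lem:gauss_sum_index_two}; the remaining work is arithmetic bookkeeping, so the main potential pitfall is the parity of $sm/k-1$ and the exponent of $2$, both of which are clean because $k$ is odd (as $k\mid m$ and $m$ is odd by hypothesis of the surrounding argument, $7^{e-d}$ being odd).
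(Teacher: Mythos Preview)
Your proof is correct and follows the same approach as the paper: reduce via Davenport--Hasse to the Gauss sum over $\FF_{2^k}$ with $k=\ord_{7^d}(2)$, evaluate that using Lemma~\ref{lem:gauss_sum_index_two} with $p=2$, $q=7$, $h=1$, and then simplify using $m/k=7^{e-d}$. You supply more detail than the paper (the bijection argument for why $\chi$ is a lift, and the explicit bookkeeping for the exponent of $2$), but the structure is identical.
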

\begin{proof}
Write $k=\ord_{7^d}(2)$ and let $\tau$ be the multiplicative character of $\FF_{2^k}$ such that $\chi$ is the lifted character of $\tau$, by which we mean that $\chi=\tau\circ\N$, where $\N$ is the field norm from $\FF_{2^{sm}}$ to $\FF_{2^k}$. Lemma~\ref{lem:gauss_sum_index_two} and the preceding discussion implies that
\[
G(\tau)=2^{(k-3)/2}(-1\pm\sqrt{-7}).
\]
From the Davenport-Hasse theorem~\cite[Theorem 5.14]{LidNie1997} we find that
\[
G(\chi)=-(-1)^{sm/k}\left[2^{(k-3)/2}(-1\pm\sqrt{-7})\right]^{sm/k},
\]
and the lemma follows since $m/k=\phi(7^e)/\phi(7^d)=7^{e-d}$.
\end{proof}
\par
The next lemma gives the desired control for the error term in Lemma~\ref{lem:gauss_sum_fourier}.
\begin{lemma}
\label{lem:Gauss_sum_error}
Let $e$ be a positive integer, let $v=7^e$, and write $m=\ord_v(2)$. Let $\e>0$. Then there is an infinite set $S$ of odd positive integers such that, for all $s\in S$ and all nontrivial multiplicative characters $\chi$ of $\FF_{2^{sm}}$ of order dividing $v$, we have
\[
\abs{\arg G(\chi)}\le \e.
\]
Here, $\arg(\xi)\in(-\pi,\pi]$ is the principal angle of a nonzero complex number~$\xi$.
\end{lemma}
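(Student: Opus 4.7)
The plan is to combine the explicit Gauss sum evaluation from Lemma~\ref{lem:Gauss_sum_evaluations} with an equidistribution argument on the unit circle. Set $\beta=(-1+\sqrt{-7})/2^{3/2}$, so that $\abs{\beta}=1$, and let $\alpha=\arg\beta$. For any odd $s$ and any nontrivial character $\chi$ of $\FF_{2^{sm}}$ of order dividing $v=7^e$ (necessarily of order exactly $7^d$ for some $d\in\{1,\dots,e\}$), Lemma~\ref{lem:Gauss_sum_evaluations} together with $-(-1)^s=1$ gives $G(\chi)/2^{sm/2}=\beta^{\pm 7^{e-d}s}$, so $\arg G(\chi)$ is the principal value of $\pm 7^{e-d}s\alpha$ modulo $2\pi$. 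Thus the lemma reduces to producing infinitely many odd $s$ such that $\abs{7^{e-d}s\alpha \bmod 2\pi}\le \e$ for every $d\in\{1,\dots,e\}$.

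The hard part is verifying that $\beta$ is not a root of unity, equivalently that $\alpha/\pi$ is irrational. I would argue algebraically in the ring of integers $\mathcal{O}_K$ of $K=\QQ(\sqrt{-7})$: letting $\gamma=(-1+\sqrt{-7})/2\in\mathcal{O}_K$, one has $\gamma\overline{\gamma}=2$, and since $\mathcal{O}_K$ is a unique factorization domain the prime $2$ splits as $(2)=(\gamma)(\overline{\gamma})$ with $(\gamma)\ne(\overline{\gamma})$ (were they equal, $\gamma=\pm\overline{\gamma}$, which contradicts $\gamma-\overline{\gamma}=\sqrt{-7}\ne 0$ and $\gamma+\overline{\gamma}=-1\ne 0$). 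Because $\beta=\gamma/\sqrt{2}$, an identity $\beta^n=1$ would yield $\gamma^n=(\sqrt{2})^n$; this is impossible for $n$ odd, since $(\sqrt{2})^n\notin K$ while $\gamma^n\in K$, and for $n=2k$ it would force the ideal equation $(\gamma)^{2k}=(\gamma)^k(\overline{\gamma})^k$, i.e.\ $(\gamma)=(\overline{\gamma})$, a contradiction.

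Given this irrationality, Weyl's equidistribution theorem, restricted to the arithmetic progression of odd positive integers, guarantees that $(s\alpha \bmod 2\pi)$ is equidistributed in $(-\pi,\pi]$ as $s$ runs through odd positive integers. Assuming without loss of generality $\e\le\pi$, the set $S$ of odd $s$ with $\abs{s\alpha \bmod 2\pi}\le \e/7^{e-1}$ is therefore infinite. For $s\in S$, writing $s\alpha=2\pi k+\delta$ with $\abs{\delta}\le \e/7^{e-1}$, we obtain $7^{e-d}s\alpha=2\pi\,7^{e-d}k+7^{e-d}\delta$ with $\abs{7^{e-d}\delta}\le \e\le\pi$, so the principal angle is precisely $7^{e-d}\delta$ and hence $\abs{\arg G(\chi)}\le \e$ for every nontrivial $\chi$ of order dividing $v$ and every $s\in S$. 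The one genuinely nontrivial ingredient is the non-rootedness of $\beta$; everything else is routine given Lemma~\ref{lem:Gauss_sum_evaluations} and standard equidistribution.
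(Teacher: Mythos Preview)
Your proof is correct and follows essentially the same route as the paper: invoke Lemma~\ref{lem:Gauss_sum_evaluations} to reduce to the angle of $\beta=(-1+\sqrt{-7})/2^{3/2}$, show $\beta$ is not a root of unity, apply Weyl equidistribution along the odd integers to make $|s\alpha \bmod 2\pi|\le \e/7^{e-1}$ for infinitely many odd~$s$, and then scale by~$7^{e-d}$. The only cosmetic difference is that the paper dispatches the non-root-of-unity step in one line by noting that the only roots of unity in $\QQ(\sqrt{-7})$ are $\pm 1$ (so $\beta^2\in\QQ(\sqrt{-7})$ would force $\beta^2=\pm 1$, which it visibly is not), whereas you give a longer but equally valid factorization argument in $\mathcal{O}_K$.
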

\begin{proof}
Let $\tau$ be a multiplicative character of $\FF_{2^m}$ of order $v$. Since the units of the ring of algebraic integers in $\QQ(\sqrt{-7})$ are $\pm 1$, we find from Lemma~\ref{lem:Gauss_sum_evaluations} that $G(\tau)/2^{m/2}$ is not a root of unity. Therefore Weyl's uniform distribution theorem~\cite[Satz~2]{Wey1916} implies that $([G(\tau)/2^{m/2}]^{2i})_{i\in\NN}$, and therefore also $([G(\tau)/2^{m/2}]^{2i+1})_{i\in\NN}$, is uniformly distributed on the complex unit circle. Hence there is an infinite set $S$ of odd positive integers such that
\[
\abs{\arg(G(\tau)^s)}\le \frac{\e}{7^{e-1}}
\]
for all $s\in S$.
\par
Let $s\in S$ and let $\tau'$ be the lifted character of $\tau$ to $\FF_{2^{sm}}$, namely $\tau'=\tau\circ\N$, where $\N$ is the field norm from $\FF_{2^{sm}}$ to $\FF_{2^m}$. Then $\tau'$ has order~$v=7^e$ and the Davenport-Hasse theorem~\cite[Theorem 5.14]{LidNie1997} states $G(\tau')=G(\tau)^s$, so that
\[
\abs{\arg G(\tau')}\le \frac{\e}{7^{e-1}}.
\]
Now let $\chi$ be a multiplicative character of $\FF_{2^{sm}}$ of order $7^d$, where $1\le d\le e$. Then by Lemma~\ref{lem:Gauss_sum_evaluations} we have
\[
\abs{\arg G(\chi)}\le 7^{e-d}\,\abs{\arg G(\tau')},
\]
which completes the proof.
\end{proof}
\par
We need one more classical result from probabilistic combinatorics due to Spencer~\cite{Spe1985}.
\begin{lemma}[{\cite[Theorem 7]{Spe1985}}]
\label{lem:lin_forms}
Let $A$ be a matrix of size $M\times N$ satisfying $M\ge N$ with real-valued entries of absolute value at most~$1$. Then, for all sufficiently large $N$, there exists $u\in\{-1,1\}^N$ such that
\[
\norm{Au}\le11\sqrt{N\log(2M/N)},
\]
where $\norm{\cdot}$ is the maximum norm on $\RR^M$.
\end{lemma}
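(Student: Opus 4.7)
The plan is to follow Spencer's original approach: first establish a partial coloring lemma via an entropy and pigeonhole argument, then iterate it to produce a full $\{-1,1\}$-coloring.

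Step 1 (partial coloring). For any $M\times N$ matrix $A$ with entries in $[-1,1]$ and $M\ge N$, I would prove that there exists $w\in\{-1,0,1\}^N$ with at least $N/2$ nonzero coordinates such that $\norm{Aw}\le C\sqrt{N\log(2M/N)}$ for an absolute constant $C$. The strategy is to assign to each $u\in\{-1,1\}^N$ a ``signature'' $\sigma(u)\in\ZZ^M$ whose $j$-th coordinate is $(Au)_j$ rounded to the nearest multiple of a scale $\delta_j$. Hoeffding's inequality gives $\Pr_u[\abs{(Au)_j}>t]\le 2e^{-t^2/(2N)}$ for $u$ uniform on $\{-1,1\}^N$, so the random variable $\sigma_j(u)$ has entropy at most roughly $\log(2\lambda_j\sqrt{N}/\delta_j)$ plus a small tail correction, for any threshold $\lambda_j$. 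Choosing $\delta_j$ and $\lambda_j$ so that $\sum_j H(\sigma_j(u))\le N/2$, the image of $\sigma$ has size at most $2^{N/2}$, and pigeonhole produces $2^{N/2}$ vectors $u$ with a common signature. Since Hamming balls of radius $N/4$ contain fewer than $2^{N/2}$ points for $N$ large, two such vectors $u,u'$ differ in at least $N/2$ coordinates, and $w=(u-u')/2$ satisfies $\norm{Aw}\le\max_j\delta_j=O(\sqrt{N\log(2M/N)})$ by construction of $\sigma$.

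Step 2 (iteration). Starting with all $N$ coordinates free, apply Step~1 to freeze at least $N/2$ coordinates to $\pm 1$, then restrict $A$ to the remaining columns (still with $M$ rows) and apply Step~1 to that submatrix, continuing for $O(\log N)$ rounds until every coordinate is fixed. If $N_k=N/2^k$ denotes the number of free coordinates at step $k$, the added discrepancy is at most $C\sqrt{N_k\log(2M/N_k)}$, and
\[
\sum_{k\ge 0}\sqrt{N_k\log(2M/N_k)}\le\sqrt{N\log(2M/N)}\sum_{k\ge 0}2^{-k/2}+\sqrt{N\log 2}\sum_{k\ge 0}\sqrt{k}\,2^{-k/2},
\]
which is $O(\sqrt{N\log(2M/N)})$ since $\log(2M/N)\ge\log 2$. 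The final $u\in\{-1,1\}^N$ therefore satisfies $\norm{Au}\le C'\sqrt{N\log(2M/N)}$.

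The main obstacle is Step~1, and in particular extracting the specific constant $11$. An $O$-estimate falls out of the entropy method quite cleanly, but the numerical constant demands careful balancing of the Hoeffding tail exponent against the per-row entropy of the signature map, uniformly across both regimes $M\approx N$ (where $\log(2M/N)\approx\log 2$ and the rounding scale dominates) and $M\gg N$ (where each row must contribute only a small amount of information to keep the signature image at size $2^{N/2}$). Since the geometric series in Step~2 contributes a factor of roughly $(1-2^{-1/2})^{-1}\approx 3.4$, the target per-step constant in the partial coloring lemma is about $3.2$, and the ``sufficiently large $N$'' hypothesis lets any $o(1)$ slack be absorbed into that constant.
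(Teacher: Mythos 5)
The paper does not prove this lemma at all: it is quoted verbatim, with the constant $11$, from Spencer's paper, so the only meaningful comparison is with Spencer's own argument, which is indeed the entropy/partial-colouring method you are reconstructing. Your outline has the right architecture, but Step~1 contains a genuine quantitative error at the pigeonhole stage. You claim that a Hamming ball of radius $N/4$ in $\{-1,1\}^N$ contains fewer than $2^{N/2}$ points; in fact it contains $\sum_{i\le N/4}\binom{N}{i}\approx 2^{H(1/4)N}\approx 2^{0.81N}$ points, far more than $2^{N/2}$. Moreover, even granting a correct ball estimate, the deduction ``all pairs in the class are at distance $<N/2$, hence the class lies in a small ball'' is not a triviality: one needs Kleitman's diameter theorem, which says that a subset of the cube of diameter at most $2r$ has size at most that of a ball of radius $r$. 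With a class of size $2^{N/2}$, Kleitman only forces two points at distance about $0.22N$, not $N/2$; to guarantee distance $N/2$ one must shrink the entropy budget to roughly $N/5$ (this is exactly why Spencer, and the Alon--Spencer treatment, take $\sum_j H(\sigma_j)\le N/5$ and classes of size $2^{4N/5}$). Either fix changes the per-round gain and hence the geometric series in Step~2, so the constant you back out ($\approx 3.2$ per round times $\approx 3.4$ from the series) is not justified by the argument as written.

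Beyond that, the proof of the stated lemma \emph{is} the extraction of the explicit constant $11$ uniformly in the two regimes $M\approx N$ and $M\gg N$, and you explicitly defer this (``the numerical constant demands careful balancing''), so the actual content of the statement is not established. Two smaller loose ends: the iteration eventually reaches small $N_k$, where the ``sufficiently large $N$'' hypothesis of the partial-colouring lemma fails, and one must close the endgame with the trivial bound $\norm{Aw}\le N_k$; and the per-row entropy bound should be written out (the tail term from Hoeffding must be shown to contribute $o(1)$ bits per row even when $M/N$ is bounded). None of these is fatal to the method --- it is the correct method --- but as it stands the proposal proves only an $O\bigl(\sqrt{N\log(2M/N)}\bigr)$ bound with an unspecified constant, and does so with one false intermediate claim that must be repaired via Kleitman's theorem and a retuned entropy budget.
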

\par
We now prove Proposition~\ref{pro:main}.
\begin{proof}[Proof of Proposition~\ref{pro:main}]
Write $m=\ord_v(2)$. %As mentioned before, we have $m=\phi(v)/2=3\cdot 7^{e-1}$. 
Lemma~\ref{lem:Gauss_sum_error} implies that, for all $\e>0$, there is an infinite set~$S$ of odd positive integers such that
\[
\Biggabs{\frac{G(\chi)}{2^{sm/2}}-1}\le \e
\]
for all $s\in S$ and all nontrivial multiplicative characters $\chi$ of $\FF_{2^{sm}}$ of order dividing $v$. Writing $n=sm$ and taking $\e=\tfrac{1}{2}\sqrt{\log (2v)/v^3}$, Lemma~\ref{lem:gauss_sum_fourier} then implies that
\[
\max_{a\in\FF_{2^n}}\;\bigabs{\hat f_2(a)}\le 1+\frac{1}{2}\sqrt{\frac{\log (2v)}{v}}
\]
for infinitely many odd positive integers $n$.
\par
It remains to consider~$\hat f_1$. Since
\[
\hat f_1(a)=\frac{1}{2^{n/2}}\sum_{y\in H}h(y)\psi(ay),
\]
we find from Lemma~\ref{lem:lin_forms} with $M=2^n$ and $N=(2^n-1)/v$ that, for all sufficiently large $n$, there exists $h:H\to\{-1,1\}$ such that 
\[
\max_{a\in\FF_{2^n}}\;\abs{\hat f_1(a)}\le 11\sqrt{\frac{\log(2v)}{v}}.
\]
Since $\hat f(a)=2^{-n/2}+\hat f_1(a)+\hat f_2(a)$ for all $a\in\FF_{2^n}$, there is an odd integer $n$ that is a multiple of $m=\ord_v(2)$ such that
\[
\max_{a\in\FF_{2^n}}\;\abs{\hat f(a)}\le 1+12\sqrt{\frac{\log (2v)}{v}},
\]
as required.
\end{proof}
\par
We now comment on the required modifications of our proof to prove Theorem~\ref{thm:pw_balanced}. The function $h$ identified in the proof of Proposition~\ref{pro:main} satisfies
\[
\Biggabs{\sum_{y\in H}h(y)}\le 11\sqrt{2^n\,\frac{\log(2v)}{v}}.
\]
Therefore we have to change at most $6\sqrt{2^n\log(2v)/v}$ values of the function~$h$ to get 
\[
\sum_{y\in H}h(y)=-1.
\]
This increases $\abs{\hat f_1(a)}$ by at most $12\sqrt{\log(2v)/v}$. The resulting function $f$ is balanced and satisfies
\[
\max_{a\in\FF_{2^n}}\;\abs{\hat f(a)}\le 1+24\sqrt{\frac{\log (2v)}{v}}.
\]
Using $1\le\mu'_{n+2}\le\mu'_n$, this shows that $\lim_{i\to\infty}\mu'_{2i+1}=1$. We combine this with $\lim_{i\to\infty}\mu'_{2i}=1$, which was already shown in~\cite{Dob1995}, but also follows from our argument using further slight modifications, to obtain a proof of Theorem~\ref{thm:pw_balanced}.

\section*{acknowledgements}
I would like to thank James A.\ Davis for many valuable discussions in the early stage of this research.

% \bibliographystyle{abbrv}
% \bibliography{references}

\end{document}